\renewcommand{\baselinestretch}{1.2}
\begin{document}
\renewcommand{\baselinestretch}{1.0}
\newtheorem{theorem}{Theorem}[section]
\newtheorem{lemma}[theorem]{Lemma}
\newtheorem{corollary}[theorem]{Corollary}
\newtheorem{prop}[theorem]{Proposition}
\newtheorem{defi}[theorem]{Definition}
\newtheorem{cons}[theorem]{Consequence}
\renewcommand{\theequation}{\arabic{section}.\arabic{equation}}
\newtheorem{definition}[theorem]{Definition}
\newtheorem{example}[theorem]{Example}
\newtheorem{remark}[theorem]{Remark}
\newtheorem{assumption}[theorem]{Assumption}
\newtheorem{algo}[theorem]{Algorithm}
 \def\ad#1{\begin{aligned}#1\end{aligned}}  \def\b#1{\mathbf{#1}}
\def\a#1{\begin{align*}#1\end{align*}} \def\an#1{\begin{align}#1\end{align}}
\def\e#1{\begin{equation}#1\end{equation}} \def\d{\nabla\cdot}
\def\p#1{\begin{pmatrix}#1\end{pmatrix}} \def\c{\operatorname{curl}}
 \def\vc{\nabla\times } \numberwithin{equation}{section}
\def\boxit#1{\vbox{\hrule height1pt \hbox{\vrule width1pt\kern1pt
     #1\kern1pt\vrule width1pt}\hrule height1pt }}
 \def\lab#1{\boxit{\small #1}\label{#1}}
  \def\mref#1{\boxit{\small #1}\ref{#1}}
 \def\meqref#1{\boxit{\small #1}\eqref{#1}}

   \def\lab#1{\label{#1}} \def\mref#1{\ref{#1}} \def\meqref#1{\eqref{#1}}

\def\T{{\mathcal T}}\def\Th{{\mathcal T}_h}

\title[Flux-conserving   FEM  ]
   {Flux-conserving finite element methods}
\author{Shangyou Zhang}
\address{Department of Mathematical Sciences, University of Delaware,
    Newark, DE 19716, USA.  szhang@udel.edu }
    \author{Zhimin Zhang}
\address{Department of Mathematics, Wayne State University, Detroit,
   MI 48202, USA. ag7791@wayne.edu }
\thanks{This author is partially supported by the US National Science Foundation through grant
DMS-0612908, the Ministry of Education of China through the Changjiang Scholars program,
and Guangdong Provincial Government of China through the
``Computational Science Innovative Research Team" program.}
\author{Qingsong Zou}
\address{Department of Scientific Computation and Computers' Applications and Guangdong Province Key Laboratory of Computational Science, 
  Sun Yat-sen University, Guangzhou, 510275, China.
    mcszqs@mail.sysu.edu.cn }
\begin{abstract}
We analyze the flux conservation property of the
      finite element method.
It is shown that the finite element solution
  does approximate the flux locally in the optimal order, i.e.,
  the same order as that of the nodal interpolation operator.
We propose two methods, post-processing the finite element solutions locally.
The new solutions, remaining as optimal-order solutions,
   are flux-conserving elementwise.
In one of our methods, the processed solution also satisfies the
   original finite element equations.
While the high-order finite volume schemes
 are still under construction, our methods produce
 finite-volume-like finite element solution of any order.
In particular,  our methods avoid solving non-symmetric finite volume
   equations.
Numerical tests in 2D and 3D verify our findings.

\vskip .7cm
{\bf AMS subject classifications.} \ {Primary 65N30; Secondary 45N08}



\vskip .3cm

 {\bf Key words.} \ {High order, finite volume method, flux conservation,
    finite element method}

\end{abstract}

\maketitle

\section{Introduction}
The finite element method (FEM) is a most popular method in solving
  partial differential equations, cf. \cite{Brenner}.
Due to its local conservation property of flux,
     the finite volume method (FVM) is also
      used in a wide range of computation,
    especially in  computational fluid dynamics, cf. \cite{LeVeque2002,Shu}.
However, the mathematical theory on FVM
   (cf., \cite{Eymard, LeVeque2002, LiChenWu2000})
   has not been fully developed, at least,
       not as satisfactory as that for FEM.
Low order FVM theory ($P_1$ and some $P_2$ methods) is well established,
    see e.g., \cite{BankRRoseD1987,  Cai1, Cai3, EwingLinLin2002,
     Hackbusch1988,LiChenWu2000}.
Both the design and analysis on high order, symmetric FVMs are still
   under investigation,
   see \cite{ Cai.Z_Park.M2003, chen,  ChenWuXu2011,
    Liebau1996,PlexousakisZouraris2004, Tian.Chen_1991,
  Xu.J;Zou.Q2009}.

In this paper, we seek flux-conserving solutions
    in a completely different way.
The basic idea is to
   do a post processing on the finite element solution so that
   the new solution is flux conserving elementwise.
To this end, we first analyze the flux-conservation property
     of the finite element solution.
It is shown that the order of approximation of finite element in flux
   is optimal,  the same order as that of nodal interpolation.
The order is $O(h^{k-1})$ for the $k$-th degree finite element.
That is,  there is no convergence in total flux when using linear ($P_1$)
   finite element.
This is also confirmed numerically, both in 2D and 3D.
To overcome this shortcoming of finite element method,
  we  correct the flux error locally
    on each element by bubble functions, cf. Figure \mref{bubble13}.
In one method, we use the same order bubbles,  cubic bubbles in 2D and
    quartic bubbles in 3D, for any order finite element method.
In the second method,  we use degree-$(k+2)$ orthogonal bubbles
    for the $k$-th degree finite element solutions.
In the latter method,  the post-processed finite element solution
   still satisfies the original finite element equations.
In both methods, the post-processed finite element solution remains
   as an optimal-order solution,   in both $H^1$ and $L^2$ norms.

\begin{figure}[htb]
   \begin{center}\setlength{\unitlength}{1in}
 \begin{picture}(4.7,1)(0,.1)
 \put(0,0){\includegraphics[scale=0.28]{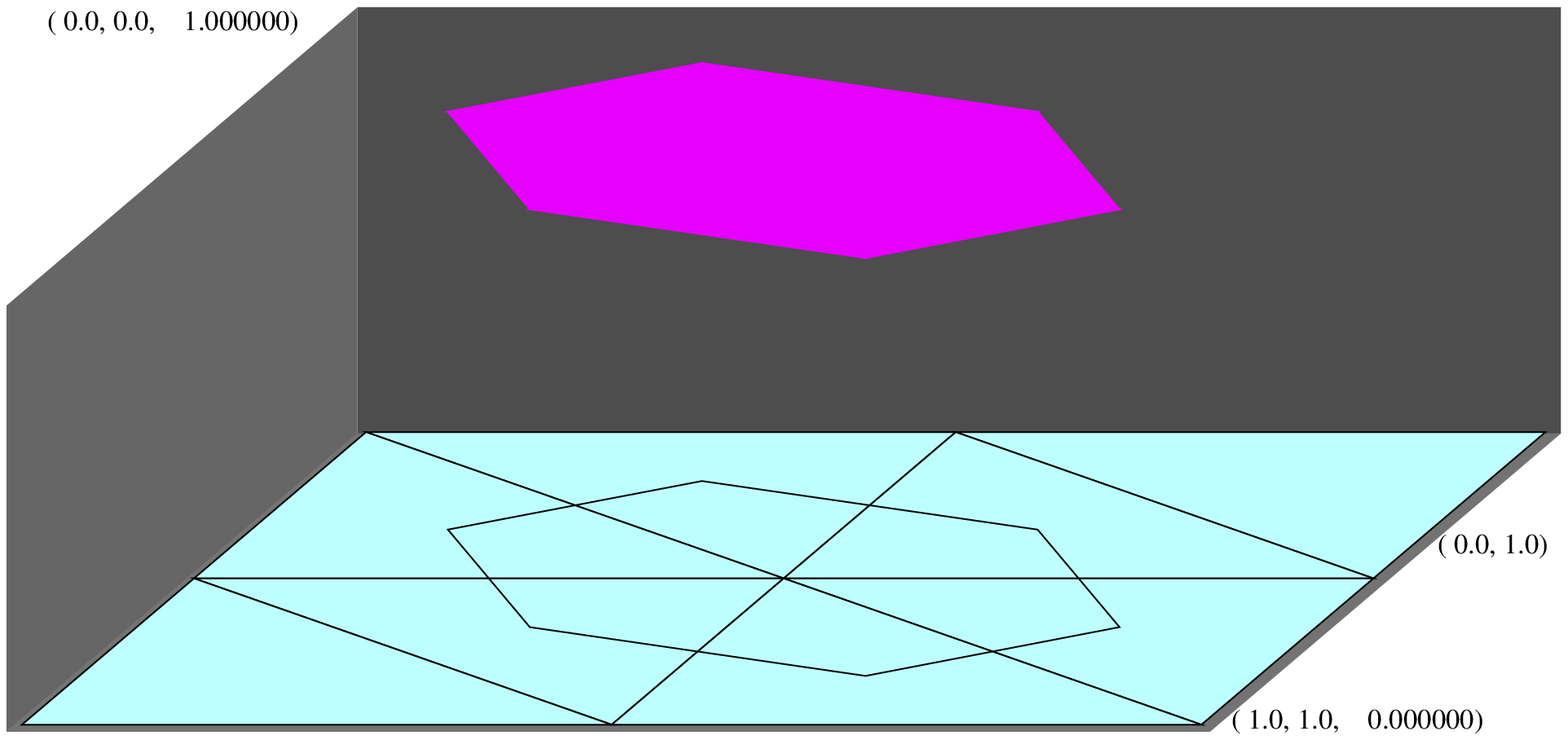}}
 \put(2.4,0){\includegraphics[scale=0.28]{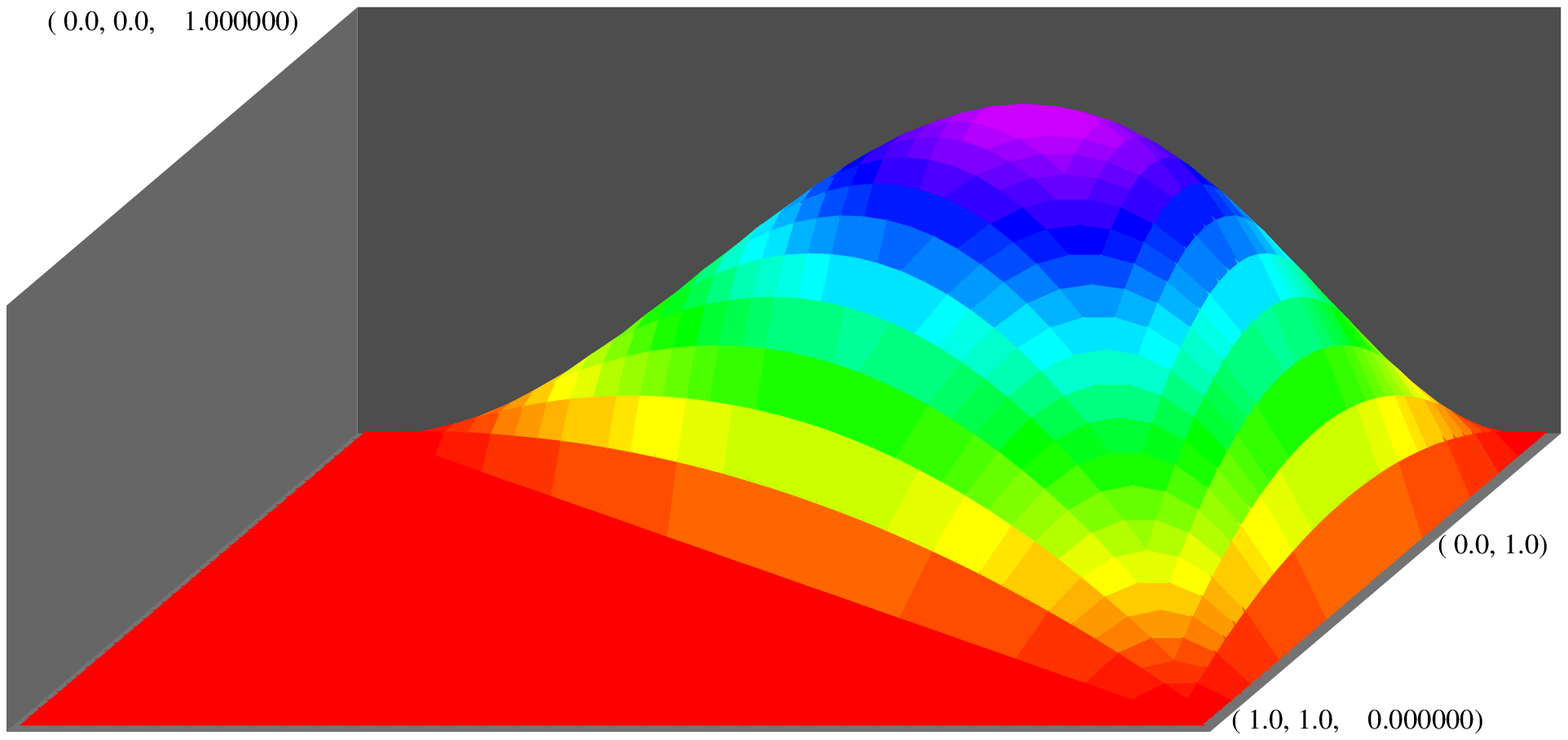}}
 \end{picture}
   \end{center}
\caption{A finite volume test function, and a cubic bubble function.}
    \lab{bubble13}
\end{figure}

We note that the finite volume solution is flux-conserving on each
   dual cell, i.e., control-volume, see the left graph in Figure
	\mref{bubble13}.
However for most high-order FVM solutions, the approximate theory has
    not been established  cf. \cite{chen,ChenWuXu2011,xu,Xu.J;Zou.Q2009}.
Also, other than $P_1$ FVM for constant coefficient problems,
   it is not natural to derive symmetric FVM,
   cf. \cite{Hermeline,Li.Y;Shu.S;Xu.Y;Zou.Q2011}
  and references therein.
This is important for eigenvalue problems \cite{Dai}, even in dealing
   low order terms.
On the other hand,  symmetric, high-order finite element
   equations are naturally
   defined, by the orthogonal projection in a Hilbert space, \cite{Brenner}.
Such high-order finite element equations can be solved effectively
   by the multigrid method or other fast solvers \cite{Brenner,Scott-Zhang}.
At the end, we correct the finite element solutions by
    bubble functions, to obtain flux-conserving solutions.
Such a correction does not alter the value of the finite element solution on
   the inter-element boundary, i.e.,
    on vertices, edges, and triangles (in 3D).

The rest of manuscript is organized as follows.
In Section 2,  we introduce the finite element method and analyze its
   flux approximation.
In Section 3, we define a post-processing algorithm.
 We will show the flux-conserving property of the processed solution and
   its optimal order of convergence.
In Section 4, we provide numerical examples in 2D and 3D.
Some remarks are made in Section 5.

\vskip .8cm

\section{On flux-conservation property of standard FEM}
Let $\Omega$ be a   bounded,  polygonal domain in ${\mathbb R}^d, d\geq 2$.
We consider  the following second-order elliptic boundary value
    problem
\begin{eqnarray}\lab{eq:2.1}
-\nabla\cdot(\alpha (\b x) \nabla u)
	 =&f  &{\rm in}\ \ \ \Omega,\\ \lab{eq:2.2}
      u=&0 &{\rm on}\ \ \  \Gamma,
\end{eqnarray}
where the boundary $\Gamma=\partial\Omega$ and $\alpha(\b x)$ is a bounded
  and  piecewise  continuous function that is bounded below:
   There exists a constant $\alpha_0>0$ such that
   $\alpha(\b x)\geq \alpha_0$ for almost all $x\in \Omega$.
We choose homogeneous Dirichlet boundary condition here only for
   simplicity of presentation.
The  analysis is similar for other boundary conditions.

Let ${\mathcal T}_h$ be a quasi-uniform, and shape-regular simplicial {\it
   triangulation} on the domain $\Omega$ :
	\a{ \Th = \{ \tau \mid \operatorname{diam}(\tau)\le h <1 \}. }
 With respect to ${\mathcal T}_h$, we define
     an order $k$ finite element subspace
\begin{equation}\lab{trialspace}
   U_h:=\left\{v\in C(\overline{\Omega}): v|_\tau\in {\mathcal P}_k, \ \
    \mbox{for all}\  \tau\in {\mathcal T}_h,\ \
   v|_{\partial\Omega}=0\right\},
\end{equation}
  where ${\mathcal P}_k$ is the
      space of all polynomials of degree equal to or less than $k$.
  Thus, $U_h\subset H_0^1(\Omega)$.
     The finite element solution of \eqref{eq:2.1}
       and \eqref{eq:2.2} is a function $u_h\in U_h$ such that
\begin{equation}\lab{femsolu}
    a(u_h, v_h)=(f,v_h) \quad v_h\in U_h,
\end{equation}
where the bilinear form $a (\cdot,\cdot)$ is defined  by
\[
    a(v,w)=\int_\Omega \alpha(\b x)\nabla v\cdot\nabla w\,  d\b x
	\quad \forall v,w\in H_0^1(\Omega),
\]
and the inner product $(\cdot,\cdot)$
   is defined by
\[
(v,w)=\int_\Omega vw\,  d\b x
	\quad \forall v,w\in  L^2(\Omega).
\]

Unlike the finite volume solutions, the FEM solution $u_h$ in \meqref{femsolu}
    does not satisfy the following conservation law elementwise
\[   \int_\tau f\,d\b x +\int_{\partial\tau}\alpha(\b x)
    \frac{\partial  u_h}{\partial {\bf n}} \, ds \ne 0
   \quad \forall \tau\in \Th.
  \]
 We will define a post-processing method next section so that the
    above flux-error is zero elementwise.
But we will show first that
   the finite element solution does preserve flux locally in
     certain degree.
In this paper, we adopt a notation $\lesssim$, for ``$\le C$",
   where $C$ is a generic, positive constant, independent of the
	grid size $h$, and some other parameters.
  That is,  ``$A\lesssim B$" means that $A$ can be
   bounded by $B$ multiplied by a constant which is independent of
   the parameters which $A$ and $B$ may depend on.

\begin{theorem}\lab{th:flux}
Let $u\in H^{k+1}(\Omega)$ solve \meqref{eq:2.1}--\meqref{eq:2.2} and
  $u_h$ solve \meqref{femsolu}. It holds that
\begin{equation}\lab{flux}
  \sum_{\tau\in\Th} \left|\int_\tau f\,d\b x+
    \int_{\partial\tau}\alpha(\b x)
	\frac{\partial  u_h}{\partial {\bf n}} ds \right|
    \lesssim h^{k-1}|u|_{H^{k+1}},
\end{equation}
where the hidden constant depends only on $\alpha(\b x)$ and
	the shape-regularity of $\Th$.
\end{theorem}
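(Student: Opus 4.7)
The plan is to convert the local flux residual into a face integral of the gradient error, then estimate globally via trace inequalities and standard finite element error bounds. First, since $-\nabla\cdot(\alpha\nabla u)=f$ in $\Omega$ and $u$ is smooth enough on each element $\tau$ for the divergence theorem,
\[
\int_\tau f\,d\b x \;=\; -\int_{\partial\tau}\alpha\,\frac{\partial u}{\partial\b n}\,ds,
\]
so the residual becomes a pure gradient-error face integral:
\[
R_\tau := \int_\tau f\,d\b x + \int_{\partial\tau}\alpha\,\frac{\partial u_h}{\partial\b n}\,ds \;=\; \int_{\partial\tau}\alpha\,\frac{\partial (u_h-u)}{\partial\b n}\,ds.
\]

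Next, I would bound $|R_\tau|$ face by face using Cauchy--Schwarz and then sum with one more Cauchy--Schwarz. Since each face appears at most twice in $\sum_\tau\sum_{F\subset\partial\tau}$ and the total face area of $\Th$ is $O(h^{-1})$,
\[
\sum_\tau |R_\tau| \;\lesssim\; h^{-1/2}\Bigl(\sum_{\tau}\sum_{F\subset\partial\tau}\|\nabla(u_h-u)\|_{L^2(F)}^2\Bigr)^{1/2}.
\]
The scaled trace inequality $\|w\|_{L^2(F)}^2\lesssim h^{-1}\|w\|_{L^2(\tau)}^2+h\|\nabla w\|_{L^2(\tau)}^2$ with $w=\nabla(u_h-u)$ reduces the face sum to a global $H^1$ error plus a weighted piecewise $H^2$ seminorm of $u_h-u$. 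The $H^1$ piece is handled by the standard a priori bound $\|\nabla(u_h-u)\|_{L^2(\Omega)}\lesssim h^k|u|_{H^{k+1}}$, contributing $h^{-1}\cdot h^{2k}=h^{2k-1}$.

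The subtle piece is $\sum_\tau h\|\nabla^2(u_h-u)\|_{L^2(\tau)}^2$, since $u_h$ is not globally $H^2$. Here I would split $u_h-u=(u_h-I_h u)+(I_h u-u)$, with $I_h$ the nodal Lagrange interpolant: the interpolation estimate gives $\|\nabla^2(I_h u-u)\|_{L^2(\tau)}\lesssim h^{k-1}|u|_{H^{k+1}(\tau)}$, while the inverse inequality on the piecewise polynomial $u_h-I_h u$ gives $\|\nabla^2(u_h-I_h u)\|_{L^2(\tau)}\lesssim h^{-1}\|\nabla(u_h-I_h u)\|_{L^2(\tau)}$, whose global sum is $O(h^{-1}\cdot h^k)$ by the $H^1$ estimate applied to both $u_h-u$ and $u-I_h u$. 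These contribute $h\cdot h^{2(k-1)}=h^{2k-1}$. Collecting the pieces yields
\[
\sum_\tau|R_\tau| \;\lesssim\; h^{-1/2}\cdot h^{k-1/2}|u|_{H^{k+1}} \;=\; h^{k-1}|u|_{H^{k+1}},
\]
as claimed. The main obstacle is precisely this $\nabla^2$ term: because $u_h$ is only piecewise polynomial, one cannot apply a pure approximation estimate, and the inverse-inequality detour through $I_h u$---which costs a factor $h^{-1}$ but is paid back by the extra power of $h$ from the global $H^1$ convergence rate---is the key device. Everything else amounts to bookkeeping with Cauchy--Schwarz, trace, and the mesh regularity.
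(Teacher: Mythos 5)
Your proof is correct, and it shares the paper's skeleton --- the divergence theorem converts the residual into $\int_{\partial\tau}\alpha\,\partial(u_h-u)/\partial\mathbf{n}\,ds$, a scaled trace inequality moves this into the element, and the whole difficulty is the piecewise $H^2$ seminorm of $u-u_h$ --- but you resolve that difficulty by a different device. The paper maps to the reference element and invokes the seminorm inequality $|\hat v|_{H^2(\hat\tau)}\lesssim|\hat v|_{H^1(\hat\tau)}+|\hat v|_{H^{k+1}(\hat\tau)}$ together with the observation that $|u-u_h|_{H^{k+1}(\tau)}=|u|_{H^{k+1}(\tau)}$ because $u_h$ is a degree-$k$ polynomial; this yields $|u-u_h|_{H^2(\tau)}\lesssim h_\tau^{-1}|u-u_h|_{H^1(\tau)}+h_\tau^{k-1}|u|_{H^{k+1}(\tau)}$, which is exactly the bound your interpolant-plus-inverse-inequality detour produces. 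Your route is more elementary (no Ehrling-type seminorm inequality on the reference element is needed) and is the more standard textbook idiom. The second difference is the summation: the paper bounds each local residual by $h_\tau^{1/2}\|\partial(u-u_h)/\partial\mathbf{n}\|_{L^2(\partial\tau)}$ and then sums the resulting $\ell^1$ quantities with a Cauchy--Schwarz over the $O(h^{-d})$ elements, whereas you perform a single global Cauchy--Schwarz against the total face area $O(h^{-1})$. Your accounting is cleaner and manifestly dimension-independent; the paper's prefactor $h_\tau^{1/2}$ is really $|\partial\tau|^{1/2}\sim h_\tau^{(d-1)/2}$, and only with that reading does its local-then-global summation reproduce the stated rate $h^{k-1}$ in $d=3$. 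Both arguments land on $h^{-1/2}\cdot h^{k-1/2}|u|_{H^{k+1}}=h^{k-1}|u|_{H^{k+1}}$.
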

\begin{proof} For all $\tau\in \Th$, by \meqref{eq:2.1} and the divergence
   theorem,
 \[   \int_{\tau}  f \, d\b x =-\int_{\partial\tau} \alpha(\b x)
      \frac{\partial u}{\partial {\bf n}} ds.  \]
By \meqref{femsolu}, the finite element solution has a local flux-error,
\begin{equation}\lab{dflux}
 \int_\tau f\, d\b x + \int_{\partial\tau}\alpha(\b x)
  \frac{\partial  u_h}{\partial {\bf n}} ds
	 =-\int_{\partial\tau} \alpha(\b x)
   \frac{\partial (u-u_h)}{\partial {\bf n}} ds.
\end{equation}
Since $\Th$ is a shape-regular partition,  for all $\tau\in \T$,
  by the trace theorem and the Schwarz  inequality,
\[
  \left\|\frac{\partial (u-u_h)}{\partial {\bf n}}\right\|_{L^2(\partial\tau)}
   \lesssim h_\tau ^{-1/2}\left|u-u_h\right|_{H^1(\tau)}
   +h_\tau ^{1/2} \left|u-u_h\right|_{H^2(\tau)},
\]
where $h_\tau=\operatorname{diam}(\tau) \sim |\tau|^{1/d}$.
The fact that $u\in H^{k+1}(\Omega)$ yields $u-u_h\in H^{k+1}(\tau)$
	for all $ \tau\in \Th$.
By \cite[Theorem 4.14]{Adams}, as $\tau$ is shape-regular, we have
   \a{ \left|u-u_h\right|_{H^2(\tau)} &\lesssim
           h_\tau^{-2+d/2} \left|\hat u-\hat u_h\right|_{H^2(\hat\tau)} \\
	&\lesssim   h_\tau^{-2+d/2}
	\left( \left|\hat u-\hat u_h\right|_{H^1(\hat\tau)}
           +  \left|\hat u-\hat u_h\right|_{H^{k+1}(\hat\tau)} \right)\\
	&\lesssim   h_\tau^{-1}\left|u-u_h\right|_{H^1(\tau)}
	 +  h_\tau^{k-1}\left|u-u_h\right|_{H^{k+1}(\tau)} \\
	&=  h_\tau^{-1}\left|u-u_h\right|_{H^1(\tau)}
	 +  h_\tau^{k-1}\left|u\right|_{H^{k+1}(\tau)}, }
   where $\hat \tau$ is the reference triangle or tetrahedron,
    see \meqref{reference-t}.
 Thus, it follows that
\a{
 \left|u-u_h\right|_{H^2(\tau)}
    &\lesssim   h_\tau ^{-1}\left|u-u_h\right|_{H^1(\tau)}
   +h_\tau ^{k-1}\left|u\right|_{H^{k+1}(\tau)}.
  }
Therefore, by \eqref{dflux}, 
\an{ \lab{lf}
\left|\int_\tau f \, d\b x +\int_{\partial\tau}\alpha(\b x)
   \frac{\partial  u_h}{\partial {\bf n}}ds \right|&\le h_\tau ^{1/2}
   \|\alpha(\b x) \|_{L^\infty(\Omega)}
       \left\|\frac{\partial (u-u_h)}
    {\partial {\bf n}}\right\|_{L^2(\partial\tau)}
   \\ \nonumber
   &\lesssim  |u-u_h|_{H^1(\tau)}+ h_\tau^{k}|u|_{H^{k+1}(\tau)}.
    }
Summing up the above inequalities for all $\tau\in\T$ and using the
  optimal order approximation property that
\an{\label{femoptimal}
|u-u_h|_{H^1(\Omega)}\le h^{k}|u|_{H^{k+1}(\Omega)},
}
the inequality \eqref{flux} follows.
\end{proof}

\begin{remark} A similar estimate has been derived in
  \cite{Xu.J;Zou.Q2009} for $P_1$ finite element solutions.
In \cite{Xu.J;Zou.Q2009}, on the dual-cell, one order higher convergence
   is proved for the local flux:
   \a{  \sum_{\b x_i \in \operatorname{vertex}(\Th)}
    \left|\int_{S_{\b x_i}} f\,d\b x+
    \int_{\partial S_{\b x_i}}\alpha(\b x)
	\frac{\partial  u_h}{\partial {\bf n}} ds \right|
    \lesssim h |u|_{H^{2}(\Omega)}, }
   where $S_{\b x_i}=\cup_{\b x_i\in\tau}
	   \{\lambda_i \ge \lambda_{j} \}$,
    see the left figure in Figure \mref{bubble13}.
   Here $\lambda_i$ is the barycentric-centric coordinate on $\tau$ associated
      with vertex $\b x_i$,  cf. \meqref{bubble}.
This is because there is a supercloseness between the $P_1$ finite element
	solution and the finite volume solution, and
   because the flux-error is identically $0$ on each
    vertex-centered control volume $S_{\b x_i}$ for
    the finite volume solution.
Here we give an
  unified estimate for element-wise fluxes of FEM solutions of any order.
\end{remark}

As a direct consequence of \eqref{lf} and \eqref{femoptimal},
  we have the following estimate for the $L^2$ norm of
    numerical fluxes of FEM solutions.

\begin{corollary} Let $u\in H^{k+1}(\Omega)$ solve
  \meqref{eq:2.1}--\meqref{eq:2.2}  and
  $u_h$ solve \meqref{femsolu}. It holds that
\a{
  \left(\sum_{\tau\in\Th} \left|\int_\tau f\,d\b x+
    \int_{\partial\tau}\alpha(\b x)
	\frac{\partial  u_h}{\partial {\bf n}} ds \right|^2\right)^{1/2}
    \lesssim h^{k}|u|_{H^{k+1}(\Omega)}.
    }
\end{corollary}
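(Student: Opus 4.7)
The plan is to derive the corollary as a direct $\ell^2$-type consequence of the local flux bound \eqref{lf} already proved in Theorem \ref{th:flux}, rather than the $\ell^1$-type sum taken there. The previous proof summed the local estimates to obtain a global bound with an $h^{k-1}$ rate; here I would instead square the local estimates before summing, which produces the optimal $h^k$ rate.

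First, I would apply \eqref{lf} on each simplex $\tau\in\Th$, giving
\[
\left|\int_\tau f\,d\b x +\int_{\partial\tau}\alpha(\b x)
  \frac{\partial u_h}{\partial {\bf n}}\,ds\right|^2
  \lesssim |u-u_h|_{H^1(\tau)}^2 + h_\tau^{2k}|u|_{H^{k+1}(\tau)}^2,
\]
using the elementary inequality $(a+b)^2\le 2(a^2+b^2)$. Then I would sum over $\tau\in\Th$ and exploit the additivity of the seminorms over the triangulation:
\[
\sum_{\tau\in\Th} |u-u_h|_{H^1(\tau)}^2 = |u-u_h|_{H^1(\Omega)}^2,\qquad
\sum_{\tau\in\Th} |u|_{H^{k+1}(\tau)}^2 = |u|_{H^{k+1}(\Omega)}^2.
\]

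For the first sum I would invoke the optimal $H^1$ error estimate \eqref{femoptimal} to bound $|u-u_h|_{H^1(\Omega)}^2\lesssim h^{2k}|u|_{H^{k+1}(\Omega)}^2$. For the second sum I would simply use $h_\tau\le h$ uniformly, so that $h_\tau^{2k}\le h^{2k}$ can be pulled outside, yielding a bound of $h^{2k}|u|_{H^{k+1}(\Omega)}^2$. Adding these two contributions and taking the square root produces the claimed estimate.

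There is no real obstacle here; the corollary is essentially the square-sum repackaging of the element-wise bound \eqref{lf}. The only minor point to be careful about is that the $h^{k-1}$ rate in Theorem \ref{th:flux} arose because an $\ell^1$ sum over $O(h^{-d})$ simplices loses a factor proportional to $h^{-d/2}$ relative to the $\ell^2$ sum (by Cauchy--Schwarz); carrying out the argument in $\ell^2$ directly avoids that loss and recovers the optimal order.
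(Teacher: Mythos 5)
Your proposal is correct and follows essentially the same route as the paper: square the local bound \eqref{lf}, sum over elements using additivity of the seminorms, and conclude via the optimal $H^1$ estimate \eqref{femoptimal}. (You even write the squared powers $h_\tau^{2k}$ correctly, where the paper's displayed intermediate step carries a typographical $h_\tau^{k}$.)
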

\begin{proof} By \meqref{lf},
  \a{ \left|\int_\tau f \, d\b x +\int_{\partial\tau}\alpha(\b x)
   \frac{\partial  u_h}{\partial {\bf n}}ds \right|^2
   & \lesssim  |u-u_h|_{H^1(\tau)}^2
	+ h_\tau^{k}|u|_{H^{k+1}(\tau)}^2.
    }
  Combining them on all elements,  we get
   \a{
  \left(\sum_{\tau\in\Th}  \left|\int_\tau f \, d\b x
     +\int_{\partial\tau}\alpha(\b x)
   \frac{\partial  u_h}{\partial {\bf n}}ds \right|^2 \right)^{1/2}
	& \lesssim
    \left( |u-u_h|_{H^1(\Omega)}^2
	+ h ^{k}|u|_{H^{k+1}(\Omega)}^2 \right)^{1/2} \\
	& \lesssim   |u-u_h|_{H^1(\Omega)} + h ^{k}|u|_{H^{k+1}(\Omega)}.
   }
 The corollary is proved by the optimal-order projection property  \eqref{femoptimal} of
    the finite element solution.
\end{proof}

\vskip .8cm

\section{Construction of flux-conserving finite element solutions}

 In this section, we present an algorithm to process
    the FEM solution  so that the flux is elementwise conserving.

Let the $d$-dimensional bubble function be
\begin{equation}\lab{bubble}
   b_\tau=(d+1)^{d+1}\lambda_1\ldots\lambda_{d+1},
\end{equation} on a triangle or tetrahedron
$\tau\in\T$.  Here $\{ \lambda_i, i=1,\ldots, d+1 \}$
    are the barycentric coordinates on $\tau$.
That is,  $\lambda_i(\b x)$ is a linear function defined by
   \a{   \lambda_i(\b x_j) = \delta_{ij} \quad
	\hbox{ at $(d+1)$ vertices $\{\b x_j\}$  of simplex } \ \tau. }
By the divergence-theorem,
  \an{\lab{b-n}  \int_{ \tau} \Delta b_\tau \,d\b x
	= \int_{\partial\tau} \frac{\partial b_\tau}
       {\partial {\bf n}} \, ds  \gtrsim  h^{d-2},  }
  noting that $ {\partial b_\tau}/  {\partial {\bf n}} > 0$ on
	$\partial \tau$ except the $(d+1)$ vertices.
The scaling argument would give
   \an{ \lab{b-s} \|b_\tau\|_{H^1(\tau)}
    \lesssim h^{-1} \|b_\tau\|_{L^2(\tau)} \lesssim h^{d-1},
    \quad \hbox{ in $d$-dimension.}  }

\begin{algo}\lab{a2}
   Given the problem \meqref{eq:2.1}--\meqref{eq:2.2}  and a
	finite element space \meqref{trialspace}.

{\bf Step 1}. Solve the finite element system
\[
    a(u_h,v_h)=(f,v_h),\quad \forall v_h\in U_h
\]
   to obtain the standard
    finite element solution $u_h\in U_h$.

{\bf Step 2}. Correct $u_h$ to obtain a flux-conserving solution:
   \begin{equation}\lab{finsolu}
      \tilde{u}_h=u_h+\sum_{\tau\in\Th}  \gamma_\tau b_\tau,
  \end{equation}
  where $b_\tau$ is defined in \meqref{bubble} and
\an{\lab{gamma}
   \gamma_\tau=\left(\int_{\partial\tau}\alpha (\b x)
	\frac{\partial b_\tau}
       {\partial {\bf n} } ds \right)^{-1}
     \left( -\int_\tau f d\b x -\int_{\partial\tau}
     \alpha(\b x) \frac{\partial u_h}{\partial {\bf n}}ds \right) \quad
	 \tau\in \Th.
   }
\end{algo}

\begin{remark} The post-processed solution $\tilde{u}_h$
 satisfies the local
  flux-conservation property,
\a{  \int_{\partial\tau} \alpha (\b x) \frac{\partial (u-\tilde{u}_h) }
    {\partial {\bf n}}  ds & =
      -  \int_\tau f \, d \b x
     - \int_{\partial\tau} \alpha (\b x) \frac{\partial \tilde{u}_h}
         {\partial {\bf n}}  ds \\
	& = -  \int_\tau f \, d \b x
     - \int_{\partial\tau} \alpha (\b x) \frac{\partial  {u}_h}
         {\partial {\bf n}}  ds - \gamma_\tau \int_{\partial\tau}\alpha (\b x)
	\frac{\partial b_\tau}
       {\partial {\bf n} } ds \\ &=0
 }
for all $\tau\in \T$.
\end{remark}

\begin{remark} We use the lowest-order bubble functions in
	Algorithm \mref{a2}, i.e., degree-3 bubbles in 2D and degree-4
	in 3D.
  We can use nearly any bubble in the flux-correction, even
	non-polynomial bubbles.
  In the analysis below,  we only require a
	bubble to be zero on the boundary
   and to have non-zero total flux on the boundary of an element, cf.
	\meqref{b-n}.
  Although we use the lower order bubble, but it does not deteriorate
   the approximation  of high order finite element.
  That is,  for example,  we can apply a degree-3 bubble correction to
    a 4-th degree finite element solution.
  \end{remark}

We next analyze the convergence property  of the flux-conserving
   solution $\tilde{u}_h$.

\begin{theorem}\lab{th:convergence}
If $u\in H^{k+1}(\Omega)$, then the flux-conserving solution
   $\tilde u_h$ defined in \meqref{finsolu} approximates $u$ in
	the optimal order,
\begin{equation}\lab{h1}
  |u-\tilde{u}_h|_{H^1(\Omega)} \lesssim h^k |u|_{H^{k+1}(\Omega)}
\end{equation}
and
\begin{equation}\lab{l2}
   \|u-\tilde{u}_h\|_{L^2(\Omega)}
       \lesssim h^{k+1} |u|_{H^{k+1}(\Omega)}.
\end{equation}
\end{theorem}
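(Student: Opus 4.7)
The strategy is to split $\tilde u_h - u = (u_h-u) + \sum_{\tau\in\Th}\gamma_\tau b_\tau$ and control the two pieces separately via the triangle inequality. The first piece is handled by the optimal FEM estimate \meqref{femoptimal} in $H^1$ and by the Aubin--Nitsche duality bound $\|u-u_h\|_{L^2(\Omega)}\lesssim h^{k+1}|u|_{H^{k+1}(\Omega)}$ in $L^2$. The task therefore reduces to showing that the bubble correction $w_h:=\sum_{\tau}\gamma_\tau b_\tau$ is $O(h^k)$ in $H^1$ and $O(h^{k+1})$ in $L^2$.

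A key simplification is that the bubbles $\{b_\tau\}_{\tau\in\Th}$ have pairwise disjoint supports, so the global norms of $w_h$ split elementwise into $\sum_\tau \gamma_\tau^2 |b_\tau|_{H^1(\tau)}^2$ and $\sum_\tau \gamma_\tau^2\|b_\tau\|_{L^2(\tau)}^2$. It therefore suffices to bound $|\gamma_\tau|$ element by element. From the formula \meqref{gamma}, the denominator is bounded below in magnitude by $h_\tau^{d-2}$ (using \meqref{b-n} together with $\alpha\ge\alpha_0>0$), while the numerator is exactly the element flux defect $E_\tau$ already estimated in the proof of Theorem \mref{th:flux}: in $d$ dimensions one gets $|E_\tau|\lesssim h_\tau^{(d-2)/2}(|u-u_h|_{H^1(\tau)}+h_\tau^k|u|_{H^{k+1}(\tau)})$, the extra $h_\tau^{(d-2)/2}$ coming from $|\partial\tau|^{1/2}\sim h_\tau^{(d-1)/2}$ in the Cauchy--Schwarz step (the paper's \meqref{lf} is the 2D specialization). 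Combined, $|\gamma_\tau|\lesssim h_\tau^{(2-d)/2}(|u-u_h|_{H^1(\tau)}+h_\tau^k|u|_{H^{k+1}(\tau)})$.

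Standard scaling from the reference simplex yields $|b_\tau|_{H^1(\tau)}\sim h_\tau^{(d-2)/2}$ and $\|b_\tau\|_{L^2(\tau)}\sim h_\tau^{d/2}$. Multiplying by $|\gamma_\tau|$ the dimension-dependent powers cancel, producing the clean elementwise bounds $|\gamma_\tau b_\tau|_{H^1(\tau)}\lesssim |u-u_h|_{H^1(\tau)}+h_\tau^k|u|_{H^{k+1}(\tau)}$ and $\|\gamma_\tau b_\tau\|_{L^2(\tau)}\lesssim h_\tau\bigl(|u-u_h|_{H^1(\tau)}+h_\tau^k|u|_{H^{k+1}(\tau)}\bigr)$. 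Squaring, summing over $\tau\in\Th$, and invoking \meqref{femoptimal} then gives $|w_h|_{H^1(\Omega)}\lesssim h^k|u|_{H^{k+1}(\Omega)}$ and $\|w_h\|_{L^2(\Omega)}\lesssim h^{k+1}|u|_{H^{k+1}(\Omega)}$, which together with the triangle inequality and the standard FEM error bounds yields \meqref{h1} and \meqref{l2}.

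The main obstacle is the bookkeeping of $h$-exponents across dimensions: the denominator $h^{d-2}$ in $\gamma_\tau$ is worryingly small for $d\ge 3$, and optimality survives only because each of the bubble norms $|b_\tau|_{H^1(\tau)}$, $\|b_\tau\|_{L^2(\tau)}$ and the $|\partial\tau|^{1/2}$ hidden in the flux-error estimate carries precisely the right compensating power of $h_\tau$. Once this dimensional balance is verified, the rest of the argument is a routine $\ell^2$ assembly that converts local bounds into the global ones.
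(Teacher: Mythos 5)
Your proof is correct and follows essentially the same route as the paper: triangle inequality, the elementwise bound on $\gamma_\tau$ from the flux-error estimate of Theorem \mref{th:flux} divided by the lower bound \meqref{b-n} on the denominator, the scaling of $|b_\tau|_{H^1(\tau)}$ and $\|b_\tau\|_{L^2(\tau)}$, and an $\ell^2$ assembly over the disjointly supported bubbles. In fact your dimensional bookkeeping is tighter than the paper's own: the paper's \meqref{lf} carries the two-dimensional surface factor $h_\tau^{1/2}$ rather than $|\partial\tau|^{1/2}\sim h_\tau^{(d-1)/2}$, and its intermediate bound $|\gamma_\tau|\lesssim h^{d-2}(|u-u_h|_{H^1(\tau)}+h^k|u|_{H^{k+1}(\tau)})$ has the exponent with the wrong sign, so as literally written the exponents do not cancel for $d=3$; your $|\gamma_\tau|\lesssim h_\tau^{(2-d)/2}(\cdots)$ is the version under which the powers of $h_\tau$ genuinely cancel against $|b_\tau|_{H^1(\tau)}\sim h_\tau^{(d-2)/2}$ and $\|b_\tau\|_{L^2(\tau)}\sim h_\tau^{d/2}$ in every dimension.
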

\begin{proof} Since $\alpha (\b x) \ge \alpha_0>0$, for all $\tau\in \T$,
\[
  \int_{\partial\tau} \alpha (\b x)
     \frac{\partial b_\tau}{\partial {\bf n}} ds
      \sim h^{d-2}.
\]
By \meqref{gamma} and estimates in Theorem \ref{th:flux},
\begin{equation}\lab{alpha}
   |\gamma_\tau|\lesssim h ^{d-2}(|u-u_h|_{H^1(\tau)}
      +h ^{k} |u|_{H^{k+1}(\tau)}).
\end{equation}
On the other hand, since $\tau\in \Th$ is shape regular,
\[\|b_\tau\|_{L^2(\tau)}\sim |\tau|^{1/2}= h_\tau ^{\frac{d}{2}}\]
and
\[
|b_\tau|_{H^1(\tau)}\sim h_\tau ^{-1}|\tau|^{1/2}\sim h_\tau^{\frac{d}{2}-1}.
\]
Therefore, the correction part of $\tilde u$ is bounded by
\a{
  \left|\sum_{\tau\in \T}\gamma_\tau b_\tau\right|_{H^1(\Omega)}^2
    &= \sum_{\tau\in \T_h}\gamma_\tau^2\big|b_\tau\big|_{H^1(\tau) }^2\\
   &\lesssim    \sum_{\tau\in \T}\gamma_\tau^2 h_\tau^{d-2}
	 \sim h^{d-2} \sum_{\tau\in \T}\gamma_\tau^2 . }
By \meqref{l2} and \meqref{gamma},
   \a{ \sum_{\tau\in \T}\gamma_\tau^2
   &\lesssim  h^{2k}|u|^2_{H^{k+1}(\Omega)} .
   }
Similarly, by the scaling argument,
\[
\left\|\sum_{\tau\in \T_h}\gamma_\tau b_\tau\right\|_{L^2(\Omega)}^2
    \lesssim h^{2k+2}|u|^2_{H^{k+1}(\Omega)}.
\]
By the triangle inequalities
\[
|u-\tilde{u}_h|_{H^{ 1}(\Omega)}\le |u-u_h|_{H^{ 1}(\Omega)}
     +\left|\sum_{\tau\in \T}\gamma_\tau b_\tau\right|_{H^{ 1}(\Omega)},
\]
\[
\|u-\tilde{u}_h\|_{L^2(\Omega)}\le \|u-u_h\|_{L^2(\Omega)}
    +\left\|\sum_{\tau\in \T}\gamma_\tau b_\tau\right\|_{L^2(\Omega)},
\]
   and the standard finite elements estimates on $u_h$ (cf. \cite{Brenner}),
    the estimates \eqref{h1} and \eqref{l2} follow.
\end{proof}

 The corrected solution $\tilde u_h$,
    defined by \meqref{finsolu}, does not satisfy the original
    finite element equations \meqref{femsolu} any more, though it
    remains as an optimal-order solution.
   We can improve the method by using high-order, orthogonal
     bubble functions.
   Let \an{
   \lab{reference-t}
     \hat \tau=\{ (x,y) \ \mid \ x, y, 1-x-y \ge 0 \} }
   be  the
   reference triangle, i.e., the unit right triangle at the origin.
   Let $\hat b^{(k)} \in {\mathcal P}_k(\hat \tau)$ such that
	\a{ \hat b^{(k)} (\b x) &=0 \quad \hbox{on } \ \partial \hat \tau, \\
	    \int_{\hat \tau} \hat b^{(k)}  \Delta p_{k-2}\, d\b x &=0 \quad
		 \forall p_{k-2}\in {\mathcal P}_{k-2}(\hat \tau), \\
          \int_{\hat \tau} \Delta \hat b^{(k)}  \, d\b x &\ne 0. }
    We note that, by writing $\hat b^{(k)} = b_{\hat \tau} \hat p_{k-3}$,
      there are $\dim {\mathcal P}_{k-3}=(k-1)(k-2)/2$ degrees of freedom
      in constructing  $\hat b^{(k)}$ but only
       $\dim (\Delta {\mathcal P}_{k-2})+1=(k-2)(k-3)/2+1$ constraints
	in the constructions.  For example,  we can let
  \an{ \lab{b4}
	  \hat b^{(k)} (\b x) &= \begin{cases} b_{\hat \tau},
		&k=3, \\
         (3x+3y-2) b_{\hat \tau}, &k=4, \\
        (x^2 -3xy +y^2)  b_{\hat \tau}, &k=5, \\
       (2x^2-6xy+2 y^2 \\ \qquad
      + x^3 -16 x^2y+26 xy^2-6 y^3)b_{\hat \tau}, &k=6, \\
       (x^4-10x^3y +20 x^2y^2 -10 xy^3+y^4 )b_{\hat \tau}, &k=7, \\
       (3x^4 -30 x^3y +60 x^2y^2 -30 xy^3 +3 y^4 \\
        \qquad + 3 x^5-66 x^4y +290 x^3y^2   \\
	\qquad -360 x^2 y^3  + 129 xy^4 -10 y^5)b_{\hat \tau},
           &k=8, \end{cases} }
  where $b_{\hat \tau}$ is defined in \meqref{bubble}.
   By the construction,  the bubble functions are orthogonal to
  the finite element functions in $U_h$, in semi-$H^1$ inner-product:
   \a{ \int_\tau \nabla b_\tau \nabla v_h \, d\b x
	&=  \int_{\hat \tau}  \hat \nabla^T \hat b^{(k+2)}
	(JF)^{-T} (JF)^{-1} \hat \nabla \hat v_h |J| \, d\hat{\b x}
   \\ &= - \int_{\hat \tau}   \hat b^{(k+2)}
	  \hat \nabla \cdot
	( (JF)^{-T} (JF)^{-1} \hat \nabla \hat v_h |J| ) \, d\hat{\b x}
     \\ &= - \int_{\hat \tau}   \hat b^{(k+2)}
              p_{k-2}  \, d\hat{\b x} =0,
    } where $v_h\in U_h$ is a degree-$k$ polynomial on $\tau$
    and $p_{k-2}$ is another degree-$(k-2)$ polynomial on $\hat\tau$.
   Thus, if $b_\tau$ in \meqref{finsolu} is replaced by the
    above orthogonal-bubbles \meqref{b4}, then the processed finite
   element solution still satisfies the original equations \meqref{femsolu}:
   \a{ a(\tilde u_h, v_h) = (\nabla u_h,\nabla v_h)
	+ \sum_{\tau \in \Th} \gamma_\tau (\nabla b^{(k+2)}_{\tau},
	\nabla  v_h) = (\nabla u_h,\nabla v_h)=(f, v_h) }
   for all $v_h\in U_h$.  Here we assumed a constant coefficient
     $\alpha(\b x)\equiv \alpha_0$.
   For variable coefficient $\alpha(\b x)$,  the orthogonal bubbles can
    be constructed element by element.

\begin{figure}[htb]
   \begin{center}\setlength{\unitlength}{1in}
 \begin{picture}(4,1.7)(0,.1)
 \put(0,0.12){\includegraphics[scale=0.23]{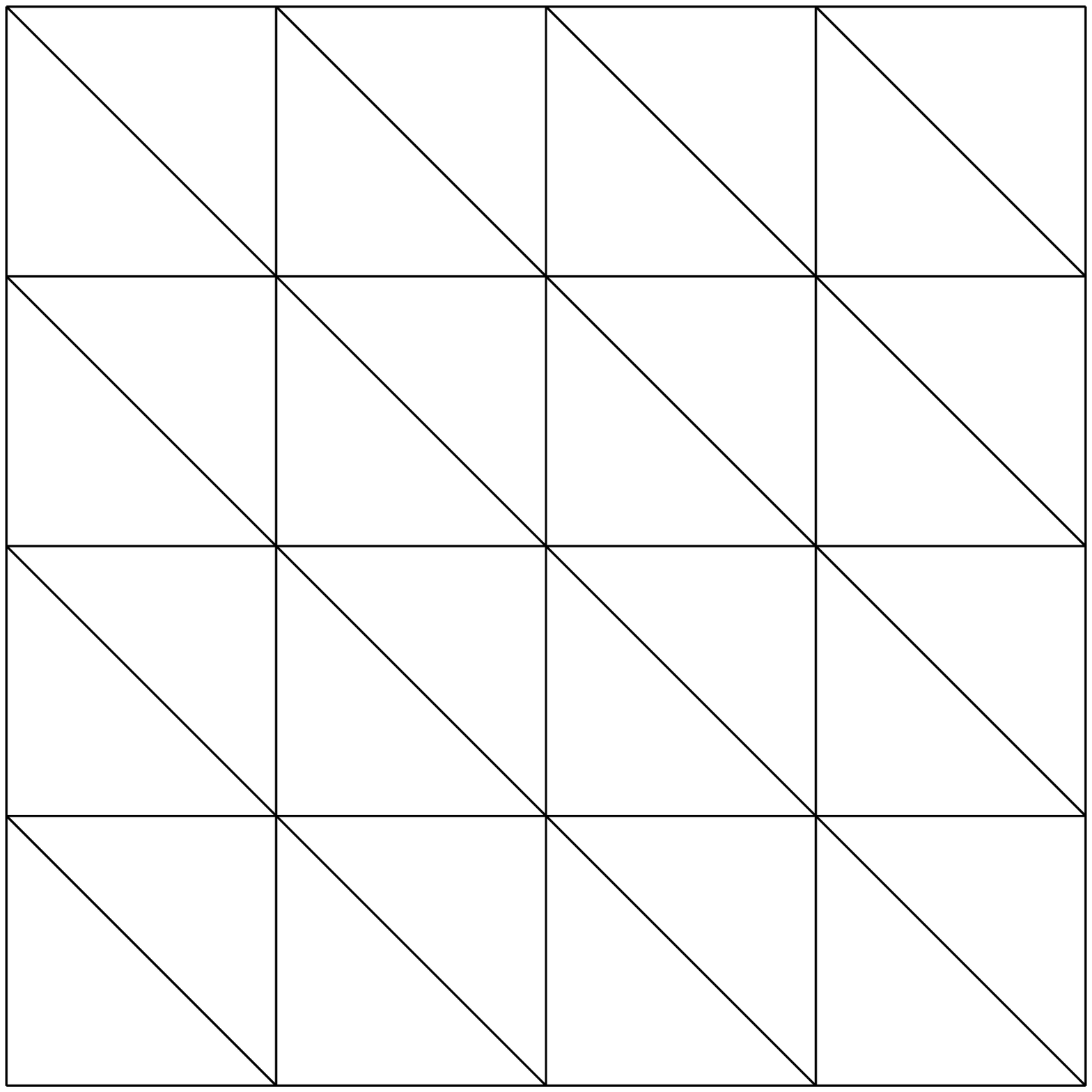}}
 \put(2,0){\includegraphics[scale=0.3]{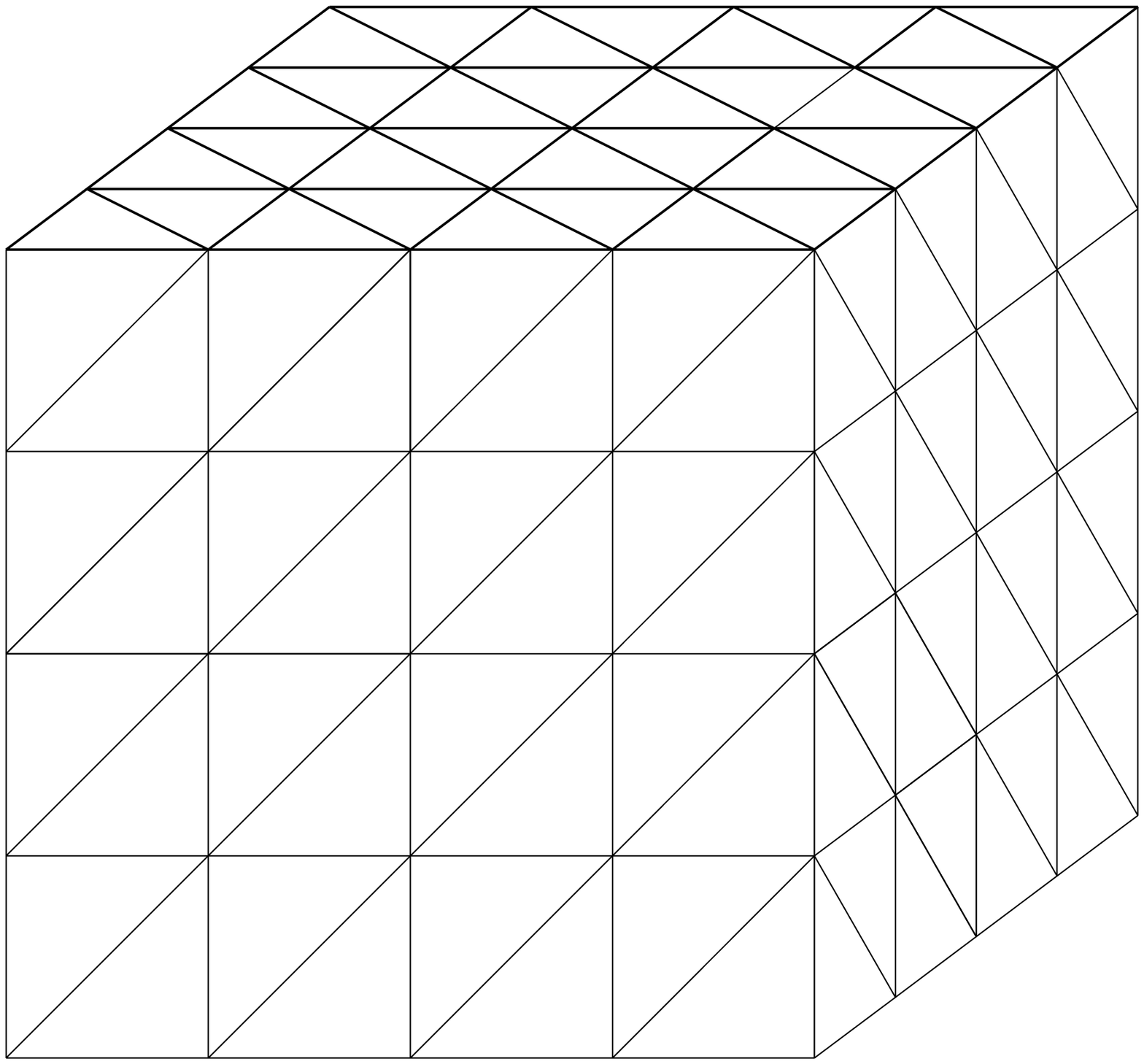}}
 \end{picture}
   \end{center}
\caption{The grid 3 in 2D and 3D $h=1/4$,
     cf. Tables \mref{2p1}--\mref{3p5}.}
    \lab{grids}
\end{figure}

\vskip .8cm

\section{Numerical Experiments}

We provide two numerical tests.  The exact solutions, in 2D and 3D, are
   \an{ \lab{s2} u(x,y) &= 2^8 x^2(1-x)^2 y^2(1-y)^2, \\
	 \lab{s3} u(x,y) &= 2^6 x (1-x)  y (1-y) z(1-z),
   }
 respectively.  We solve the Poisson equation, i.e., \meqref{eq:2.1}
  with $\alpha(\b x)\equiv 1$ on the domain,
	$(0,1)^2$ and $(0,1)^3$.
  The third level grids in 2D and 3D computation are shown in Figure
	\mref{grids}.
  Here we use (multigrid) nested refinement to generate grids,
	cf. \cite{Zhang-tet}.
  We choose a high order polynomial as an exact solution in 2D
	\meqref{s2} only to avoid
    exact finite element solutions when using higher degree elements.

 \begin{table}[htb]
  \caption{ The order of convergence for the $P_1$ element.}
   \lab{2p1}
\begin{center} \vskip -2pt
   \begin{tabular}{c|rr|rr|rr|rr|rr}  
 \hline 
   &  $ |e_h |_{H^1}$ &$h^n$&  $ |E_h |_{*}$ &$h^n$
   &  $ |\tilde e_h |_{H^1}$ & $h^n$ &
   $ |  \tilde E_h |_{*}$&$h^n$ & \#cg &dof  \\ \hline
 2& 0.08333 &0.0&  5.000 &0.0& 0.387 &0.7&  0.00000000&0.7 & 2&     9\\
 3& 0.11282 &0.0&  5.102 &0.0& 0.209 &0.9&  0.00000000&0.7& 6&    25\\
 4& 0.03590 &1.7&  5.186 &0.0& 0.107 &1.0& 0.00000000&0.0& 26&    81\\
 5& 0.00953 &1.9&  5.207 &0.0& 0.053 &1.0&  0.00000000&0.4 &55&   289\\
 6& 0.00242 &2.0&  5.211 &0.0& 0.026 &1.0&  0.00000000&0.1& 111&  1089\\
 \hline
 \end{tabular} \end{center}  \end{table}

In the data tables, we use the following notations:
  \a{   e_h & = I_h u - u_h, \\
	E_h & = u - u_h, \\
	\tilde e_h & = I_h u - \tilde u_h, \\
	\tilde E_h & = u - \tilde u_h, }
   where $I_h$ is the nodal value interpolation operator,
     $u_h$ is the finite element solution defined in \eqref{femsolu},
    and $\tilde u_h$ is the post-processed finite element solution
	defined in \eqref{finsolu}.
  Also we use the following norm to measure the error in the total flux:
   \an{\lab{L1} |E_h|_* =\sum_{\tau\in \T_h} \left|
	  \int_{\partial \tau}
	 \frac{\partial ( u-u_h )}{\partial \b n}
	  ds \right|= \sum_{\tau\in \T_h} \left|
	\int_{\tau} f d \b x + \int_{\partial \tau}
	 \frac{\partial  u_h }{\partial \b n}
	  ds \right|. }
   In all our computation,  we use the conjugate gradient method to
   solve the linear systems of finite element equations.
  As the grids are not very fine,  the conjugate gradient method is
   comparable to the optimal order solve, the multigrid method.
   Thus, we list the number of conjugate gradient iterations
      by \#cg in the data tables.

\begin{figure}[htb]
   \begin{center}\setlength{\unitlength}{1in}
 \begin{picture}(4.6,1.3)(0,.1)
 \put(0,0){\includegraphics[scale=0.28]{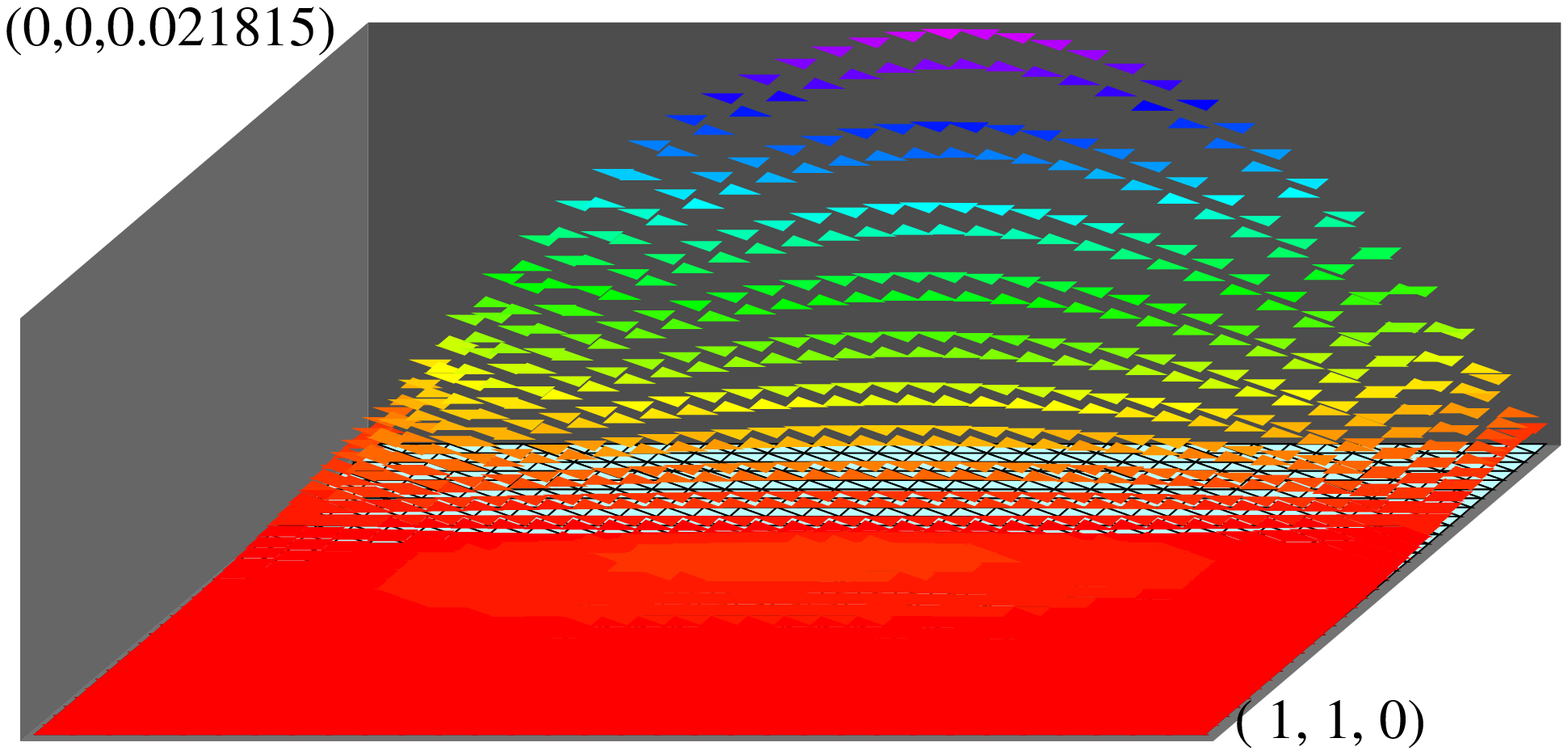}}
 \put(2.4,0){\includegraphics[scale=0.28]{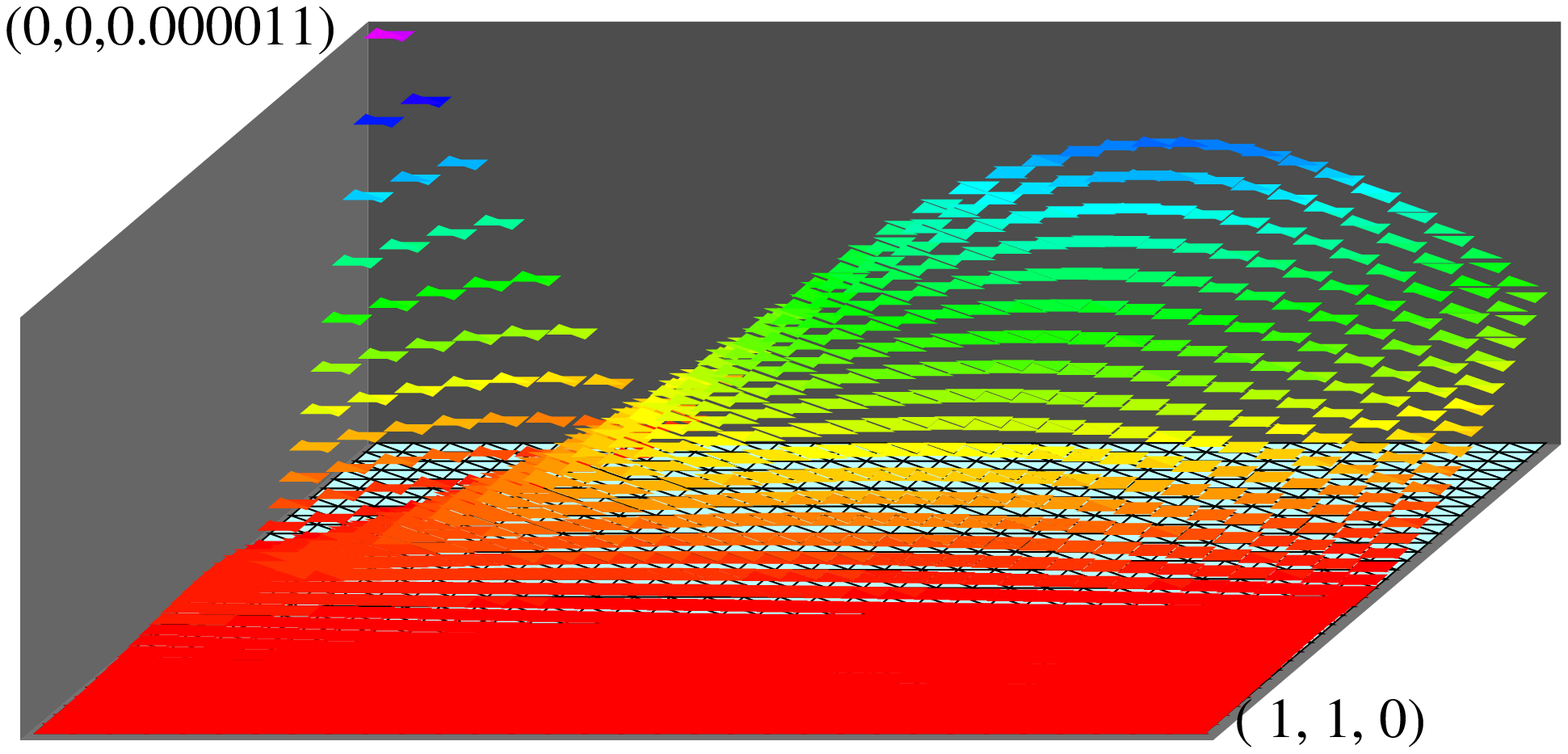}}
 \end{picture}
   \end{center}
\caption{The flux-error \meqref{L1} for $P_1$ and for $P_3$ FEM,
   for \meqref{s2}.}
    \lab{flux-13}
\end{figure}

In Table \mref{2p1},  we can see from the 4th column
      that there is no flux convergence for
   the linear finite element in 2D, confirming Theorem \ref{th:flux}.
However, the total flux error defined in \meqref{L1} is
   completely zero, other than quadrature formula differences,
   by the 8th column data in Table \mref{2p1}.
We note that even though there is no convergence for $P_1$ finite
   element in total flux,  but pointwise flux-error of $P_1$ finite
   element solution does converge to zero when $h$ goes to $0$.
It depends on the way we measure the error.
Since the total inter-element boundary grows at rate $h^{-1}$ in 2D,
    the pointwise flux-error has to converge at a rate higher than $h^1$.
To show this,  we plot the flux-error \meqref{L1} elementwise as a constant
   in Figure \mref{flux-13}.
Where on the left,  we used $P_1$ FEM, and $P_3$ FEM on the right.
The $L^\infty$ flux-error is $O(h)$ for $P_1$ FEM, and
    $O(h^4)$ for $P_3$ FEM.
This is not proved in this paper.

We note that there is a superconvergence in semi-$H^1$ norm for
   $P_1$ finite element, shown in second column  of Table \mref{2p1}.
But after flux-correction,  such a superconvergence no longer holds
	(6-th column).

 \begin{table}[htb]
  \caption{ The errors for the $P_4$ element in 2D.}
   \lab{2p4}
\begin{center} \vskip -2pt
   \begin{tabular}{c|rr|rr||rr|rr }  
 \hline 
   &   $ |e_h |_{H^1}$ &$h^n$&
    $ |E_h |_{*}$ &$h^n$
    &  $ | \tilde e_h|_{H^1}$ &$h^n$ &  $  | \tilde e_h^{(6)}|_{H^1}$ &$h^n$      \\ \hline
 1& 0.1291516&0.0&  1.19696&0.0&  0.1837393&0.0&17.35203&0.0  \\
 2& 0.0274134&2.2&  0.30318&2.0& 0.0261514&2.8&  2.26847&2.9 \\
 3& 0.0025223&3.4&  0.04526&2.7& 0.0021648&3.6&   0.17589&3.7 \\
 4& 0.0001742&3.9&  0.00584&3.0&  0.0001448&3.9&  0.01054&4.1 \\
 5& 0.0000112&4.0&  0.00073&3.0& 0.0000092&4.0&  0.00059&4.1  \\
 6& 0.0000007&4.0&  0.00009&3.0&  0.0000005&4.0& 0.00003&4.1 \\
 \hline
  \end{tabular} \end{center}  \end{table}

Next, we use the 2D $P_4$ finite element.
The data are listed in Table \mref{2p4}.
This time, the finite element solution does not have a superconvergence.
So the order of error before and after flux-correction are the
  same, order 4, the optimal order.
However, the finite element does converge in approximating the total
  flux,  but of order 3, shown in the 6th column of Table \mref{2p4},
  conforming Theorem \mref{th:flux} again.
We repeat the computation for $P_4$ FEM but with orthogonal bubbles
  $\hat \tau^{(6)}$ defined in \meqref{b4}.
This time,  $\tilde u_h$ remains as a solution to the original finite
   element equations \meqref{femsolu}.
The convergence is shown in the eighth column of Table \mref{2p4}.
Again, both  the processed finite element solutions have a zero error in
  total flux and both keep the optimal order of FEM approximation.

 \begin{table}[htb]
  \caption{ The order of convergence for the $P_1$ element in 3D.}
   \lab{3p1}
\begin{center} \vskip -2pt
   \begin{tabular}{c|rr|rr|rr|rr|r }  
 \hline 
   &  $ \|e_h \|_{L^2}$ &$h^n$&   $ |e_h |_{H^1}$ &$h^n$&
    $ | E_h |_{*}$ &$h^n$
      & \#cg   \\ \hline
 2&     0.11180&0.0&     0.86603&0.0&    10.66667&0.0&    2 \\
 3&     0.06044&0.9&     0.37530&1.2&    10.66667&0.0&    5 \\
 4&     0.01871&1.7&     0.10743&1.8&    10.66667&0.0&   24 \\
 5&     0.00494&1.9&     0.02780&2.0&    10.66667&0.0&   49 \\
 6&     0.00125&2.0&     0.00701&2.0&    10.66667&0.0&   93 \\
 \hline
   &  $ \| \tilde e_h\|_{L^2}$ &$h^n$
   &  $ | \tilde e_h|_{H^1}$ &$h^n$
   &  $ | \tilde E_h|_{*}$ &$h^n$ &dof \\ \hline
 1&     0.05514&0.0&    61.58403&0.0&     0.00000&0.0&     8\\
 2&     0.10502&0.0&    16.41392&1.9&     0.00000&0.0&      27\\
 3&     0.05780&0.9&     4.29661&1.9&     0.00000&0.0&     125\\
 4&     0.01800&1.7&     1.08916&2.0&     0.00000&0.0&    729\\
 5&     0.00477&1.9&     0.27328&2.0&     0.00000&0.0&    4913\\
 6&     0.00121&2.0&     0.06838&2.0&     0.00000&0.0&   35937\\
 \hline
 \end{tabular} \end{center}  \end{table}

Next, we do a 3D computation.
As in 2D, when using linear finite element,  there is no
   convergence in total-flux approximation,  shown in the
	6th column of Table \mref{3p1}.
As shown by Theorem \ref{th:convergence}, the post-processed finite element
  solution converges with the optimal order in $L^2$ and $H^1$
   norm, shown in the second half of Table  \mref{3p1}.
Due to a large flux-error,  the correction bubbles cause a much bigger error
   in energy norm, i.e., semi-$H^1$ norm, see 4th column in
   Tables \mref{3p1}--\mref{3p5}.
  More comments are made below.
However, the correction does not change the FEM solution on all inter-element
   triangles, i.e., $u_h=\tilde u_h$ there.
Note that the error of corrected solution in energy norm is comparable
   to that of the original finite element solution, or even smaller,
   in 2D,  see Tables \mref{2p1}--\mref{2p4}.

 \begin{table}[htb]
  \caption{ The order of convergence for the $P_3$ element in 3D.}
   \lab{3p3}
\begin{center} \vskip -2pt
   \begin{tabular}{c|rr|rr|rr|rr|r }  
 \hline 
   &  $ \|e_h \|_{L^2}$ &$h^n$&   $ |e_h |_{H^1}$ &$h^n$&
    $ | E_h |_{*}$ &$h^n$
      & \#cg   \\ \hline
 1&     0.06162&0.0&     0.37386&0.0&     2.50980&0.0&    3 \\
 2&     0.00610&3.3&     0.08622&2.1&     0.94217&1.4&   20 \\
 3&     0.00037&4.1&     0.01297&2.7&     0.21351&2.1&   54  \\
 4&     0.00002&4.1&     0.00175&2.9&     0.05201&2.0&   98 \\
 5&     0.00000&4.0&     0.00023&3.0&     0.01305&2.0&  174 \\
 \hline
   &  $ \| \tilde e_h\|_{L^2}$ &$h^n$
   &  $ | \tilde e_h|_{H^1}$ &$h^n$
   &  $ | \tilde E_h|_{*}$ &$h^n$ &dof \\ \hline
 1&     0.05273&0.0&    14.49518&0.0&     0.00000&0.0&    64\\
 2&     0.00656&3.0&     1.53168&3.2&     0.00000&0.0&  343\\
 3&     0.00040&4.0&     0.09738&4.0&     0.00000&0.0&  2197\\
 4&     0.00002&4.0&     0.00631&3.9&     0.00000&0.0&  15625\\
 5&     0.00000&4.0&     0.00044&3.8&     0.00000&0.0& 117649 \\
 \hline
 \end{tabular} \end{center}  \end{table}

Now, if we use degree 3 finite element in 3D,  we would
   have order 2 convergence for the total flux,  shown
   in Table \mref{3p3}.
We note that the flux-corrected finite element solution also
   converges at order 3 in semi-$H^1$ norm.
Bit it looks like, in Table \mref{3p3}, that
    the order of the flux-corrected finite element solution is
    higher than 3.
It is caused by higher order error in flux.
It seems the bubble term $u_b$ (see \meqref{bubble-E})
    converges to 0 at a higher order
   in this case.
Comparing the errors in column 4 in top and bottom table of Table \mref{3p3},
   we would predict the order of $|\tilde e_h|_{H^1}$ would return to
   $3$ once it catches $|e_h|_{H^1}$.
This can be seen in Table \mref{3p5}.
When using the 5th degree finite element,
   the bubble functions are inside the
  finite element space.
Then it is known from the finite element projection property that
   \an{\lab{bubble-E} |E_h |_{H^1} \le  |\tilde E_h|_{H^1}\le
	 |E_h |_{H^1} +|u_b|_{H^1}, }
  where $u_b=\sum_{\tau\in\T}\gamma_\tau b_\tau$ is the bubble correction term.
Thus, the order of converges of $|\tilde E_h|_{H^1}$ would return
   to that of $|E_h|_{H^1}$
    when the correction $|u_b|_{H^1}$ is no longer dominant,
   see column 4 in the second half of Table \mref{3p5}.

 \begin{table}[htb]
  \caption{ The order of convergence for the $P_5$ element in 3D.}
   \lab{3p5}
\begin{center} \vskip -2pt
   \begin{tabular}{c|rr|rr|rr|rr|r }  
 \hline 
   &  $ \|e_h \|_{L^2}$ &$h^n$&   $ |e_h |_{H^1}$ &$h^n$&
    $ | E_h |_{*}$ &$h^n$
      & \#cg   \\ \hline
 1&     0.00927&0.0&     0.11994&0.0&     0.12616&0.0&   12 \\
 2&     0.00014&6.1&     0.00382&5.0&     0.00815&4.0&   77 \\
 3&     0.00000&6.1&     0.00012&5.0&     0.00052&4.0&  159 \\
 4&     0.00000&6.0&     0.00000&5.0&     0.00003&4.0&  290 \\
 5&     0.00000&6.0&     0.00000&5.0&     0.00000&4.0&  505 \\
  \hline
   &  $ \| \tilde e_h\|_{L^2}$ &$h^n$
   &  $ | \tilde e_h|_{H^1}$ &$h^n$
   &  $ | \tilde E_h|_{*}$ &$h^n$ &dof \\ \hline
 1&     0.00927&0.0&     0.73820&0.0&     0.00000&0.0&   216\\
 2&     0.00014&6.1&     0.01242&5.9&     0.00000&0.0&   1331\\
 3&     0.00000&6.1&     0.00022&5.8&     0.00000&0.0&   9261\\
 4&     0.00000&6.0&     0.00000&5.5&     0.00000&0.0&  68921\\
 5&     0.00000&6.0&     0.00000&5.2&     0.00000&0.0&  531441\\
 \hline
 \end{tabular} \end{center}  \end{table}

\vskip .8cm

\section{Concluding Remarks}
 The main motivation in designing FVM is the flux-conservation,
a desirable property in many scientific disciplines such as CFD.
 However, on one side,  the accuracy analysis for high order FVM schemes
  is hard to be established.
On the other hand, the linear algebraic
   system resulting
   from the FVM scheme is often non-symmetric, even if the PDE to be
    solved is self-adjoint.

In this paper, we propose post-processing
   techniques for FEM solutions.
The new solution  preserves both
  local conservation laws and overall accuracy.
Besides,  comparing to the complex construction of FVM schemes
  and high-cost computation of non-symmetric linear systems, any existing finite element code can use our method by adding a simple
subroutine to perform the post-processing.
In this sense, the techniques presented in this paper provide a
   better option, at least for elliptic problems.

Of course  we do not mean that we should abandon the FVM for
  elliptic equations.
In a recent study of high-order FVM schemes
    for 1D elliptic equations \cite{caozhangzou2012},
  it is found that  the derivative of the FVM solutions has higher
   order  superconvergence property  than that of the corresponding
    FEM solutions at some so-called ``optimal stress points".
On the other side, our post processing techniques may even deteriorate the
    superconvergence property of the original FEM solution.
In other words, the FVM schemes may still have their advantages for
   elliptic equations.

\bibliographystyle{plain}

\begin{thebibliography}{10}

\bibitem{Adams}
   Adams, R. A. Sobolev spaces.
   Pure and Applied Mathematics, Vol. 65.
    Academic Press,
   New York-London, 1975.

\bibitem{BankRRoseD1987}  Bank, R. E.; Rose, D. J.
    Some error estimates for the box method.
   SIAM J. Numer. Anal.  24 (1987),  777--787.


\bibitem{Brenner}
  Brenner, S. C.; Scott, L. R.
    The mathematical theory of finite element methods. Third edition.
    Texts in Applied Mathematics, 15. Springer, New York, 2008.

\bibitem{caozhangzou2012}
  Cao, W.;Zhang, Z.;Zou, Q.
  Superconvergence finite volume schemes for 1D elliptic equations. 2011,
  preprint.

\bibitem{Cai1}
   Cai, Z.
   On the finite volume element method.
   Numer. Math.  58  (1991),  no. 7, 713--735.


\bibitem{Cai.Z_Park.M2003}
   Cai, Z.; Douglas, J., Jr.; Park, M.
   Development and analysis of higher order finite volume methods
    over rectangles for elliptic equations.
   Challenges in computational mathematics (Pohang, 2001).
    Adv. Comput. Math.  19  (2003),  no. 1-3, 3--33.

\bibitem{Cai3}
  Cai, Z.; Mandel, J.; McCormick, S.
   The finite volume element method for diffusion equations
    on general triangulations.
  SIAM J. Numer. Anal.  28  (1991),  no. 2, 392--402.

\bibitem{chen}
   Chen, L.
   A new class of high order finite volume methods for
     second order elliptic equations.
   SIAM J. Numer. Anal.  47  (2010),  no. 6, 4021--4043.


\bibitem{ChenWuXu2011} Chen, Z.;  Wu, J.; Xu, Y.
  Higher-order finite volume methods for elliptic boundary value problems.
  Adv. in Comput. Math., in Press.


 \bibitem{Dai}
    Dai, X.; Gong, X.; Yang, Z.; Zhang, D.; Zhou, A.
    Finite volume discretizations for eigenvalue problems with
      applications to electronic structure calculations.
     Multiscale Model. Simul.  9  (2011),  no. 1, 208--240.


\bibitem{EwingLinLin2002} Ewing, R. E.; Lin, T.;  Lin, Y.
   On the accuracy of finite volume
   element method based on piecewise linear polynomials.
    SIAM J. Numer. Anal. 39 (2002), 1865--1888.

\bibitem{Eymard}
  Eymard, R.; Gallouet, T.; Herbin, R.
  Finite Volume Methods, vol. 7. North Holland, Amsterdam, 2000.

\bibitem{Hackbusch1988}  Hackbusch, W. On first and second
  order box methods.   Computing  41 (1989), 277-296.

\bibitem{Hermeline}
   Hermeline, F. A finite volume method for approximating
      3D diffusion operators on general meshes.
  J. Comput. Phys.  228  (2009),  no. 16, 5763--5786.


\bibitem{LeVeque2002}
  LeVeque, R. J.
   Finite volume methods for hyperbolic problems.
   Cambridge Texts in Applied Mathematics.
   Cambridge University Press, Cambridge, 2002.

\bibitem{LiChenWu2000}   Li,  R.;   Chen, Z.; Wu, W.
  The Generalized Difference Methods for
  Partial Differential Equations  (Numerical Analysis of Finite
  Volume Methods).  Marcel Dikker,  New York, 2000.

\bibitem{Li.Y;Shu.S;Xu.Y;Zou.Q2011}  Li, Y.;  Shu, S.; Xu, Y.; Zou, Q.
   Multilevel preconditioning for the finite volume method.
   Math. Comp., in press.

\bibitem{Liebau1996}
  Liebau, F.
  The finite volume element method with quadratic basis
   functions. Computing 57 (1996), no. 4, 281--299.

\bibitem{Shu}
  Noelle, S.; Xing, Y.; Shu, C.-W.
   High-order well-balanced finite volume
     WENO schemes for shallow water equation with moving water.
    J. Comput. Phys.  226  (2007),  no. 1, 29--58.


\bibitem{PlexousakisZouraris2004}  Plexousakis, M.;  Zouraris, G.
  On the construction and analysis of high order locally conservative
  finite volume type methods for one dimensional elliptic problems,
  SIAM J. Numer. Anal. 42 (2004), 1226--1260.

\bibitem{Scott-Zhang} Scott, L. R.; Zhang, S.
 Higher-dimensional nonnested multigrid methods.
  Math. Comp. 58 (1992), no. 198, 457--466.

 \bibitem{Tian.Chen_1991}
  Tian, M.; Chen, Z.
  Quadratical element generalized differential methods
   for elliptic equations.
     Numer. Math. J. Chinese Univ. 13 (1991), 99--113.

\bibitem{xu} Vogel, A.; Xu, J.; Wittum, G.
    A generalization of the vertex-centered finite volume scheme to
       arbitrary high order.
    Comput. Vis. Sci.  13  (2010),  no. 5, 221--228.

\bibitem{Xu.J;Zou.Q2009}  Xu, J.;  Zou, Q.
  Analysis of linear and quadratic simplicial finite volume
     methods for elliptic
     equations. Numer. Math., 111 (2009),  469--492.


\bibitem{Zhang-tet}  Zhang, S.
 Successive subdivisions of tetrahedra and
    multigrid methods on tetrahedral meshes.
    Houston J. of Math. 21 (1995), 541--556.
\end{thebibliography}

\end{document}